\makeatletter \hypersetup{pdftitle={\@title}}}
\newcommand{\tblcaption}[1]{\def\@captype{table}\caption{#1}}
\newcommand{\ten}{\mathbf}
\newcommand{\gt}{\bm}
\newcommand{\scalar}[1]{\langle#1\rangle}
\newcommand{\str}{\varepsilon}
\renewcommand{\[}{\begin{equation*}}
\renewcommand{\]}{\end{equation*}}
\newtheorem{theorem}{\bf Theorem}
\newtheorem{example}{\bf Example}
\newtheorem{definition}{\bf Definition}
\begin{document}

\thispagestyle{firstpage}
\pagestyle{fancy}

\vspace*{0mm}
\begin{center}%

{\LARGE \bfseries How periodic surfaces bend without stretching\\}%
\vspace{1.0em}
{\normalsize Hussein NASSAR$^\textrm{*}$, Andrew WEBER\\}
\vspace{1.0em}
{\small Department of Mechanical and Aerospace Engineering, University of Missouri,\\ Columbia, 65211 MO, USA\\ $^\textrm{*}$nassarh@missouri.edu\\}
\vspace{1em}

\end{center}%
\begin{abstract} Many compliant shell mechanisms are periodically corrugated or creased. Being thin, their preferred deformation modes are inextensional, i.e., isometric. Here, we report on a recent characterization of the isometric deformations of periodic surfaces. In a way reminiscent of Gauss theorem, the result builds a constraint that relates the ways in which the periodic surface stretches, effectively but isometrically, to the ways in which it bends and twists. Several examples and use cases are presented.
\end{abstract}
  
\vskip 1.0em
{\small
\textbf{Keywords}:  compliant shell mechanisms, origami tessellations, curved creases, isometric deformations, Gauss theorem, Poisson's coefficient, normal curvatures. 
}
\vskip 1.0em

\section{Main result}
\begin{definition}
    A periodic surface $\ten x: (x,y)\mapsto (x,y,f(x,y))$ is the graph of some function $f$ that is periodic in $x$ and $y$. 
\end{definition}
We are thinking of such a surface as the midsurface of a thin shell. Function $f$ could be smooth and describe a smoothly ``corrugated'' shell. It could be piecewise smooth and describe a shell with curved creases (Figure~\ref{fig:main}). It could also be piecewise linear and describe an origami or origami-like tessellation. The main premise of the present work is that the compliant deformation modes of the thin shell are in correspondence with the \emph{isometric deflections} of the midsurface.

\begin{definition}
    An \emph{isometric deflection} of a surface $\ten x: (x,y)\mapsto (x,y,f(x,y))$ is a displacement field $\dot{\ten x}: (x,y)\mapsto (u(x,y),v(x,y),w(x,y))$ such that the infinitesimal strains
    \[
        \str_{11} \equiv \scalar{\partial_x\ten x,\partial_x \dot{\ten x}}, \quad
        \str_{22} \equiv \scalar{\partial_y\ten x,\partial_y \dot{\ten x}}, \quad
        \str_{12} \equiv \frac{\scalar{\partial_x\ten x,\partial_y \dot{\ten x}}+\scalar{\partial_y\ten x,\partial_x \dot{\ten x}}}{2}
    \]
    are zero. Therein, the brackets $\scalar{\cdot\,,\cdot}$ denote the usual dot product.
\end{definition}

\begin{figure}
    \centering
    \includegraphics[width=\linewidth]{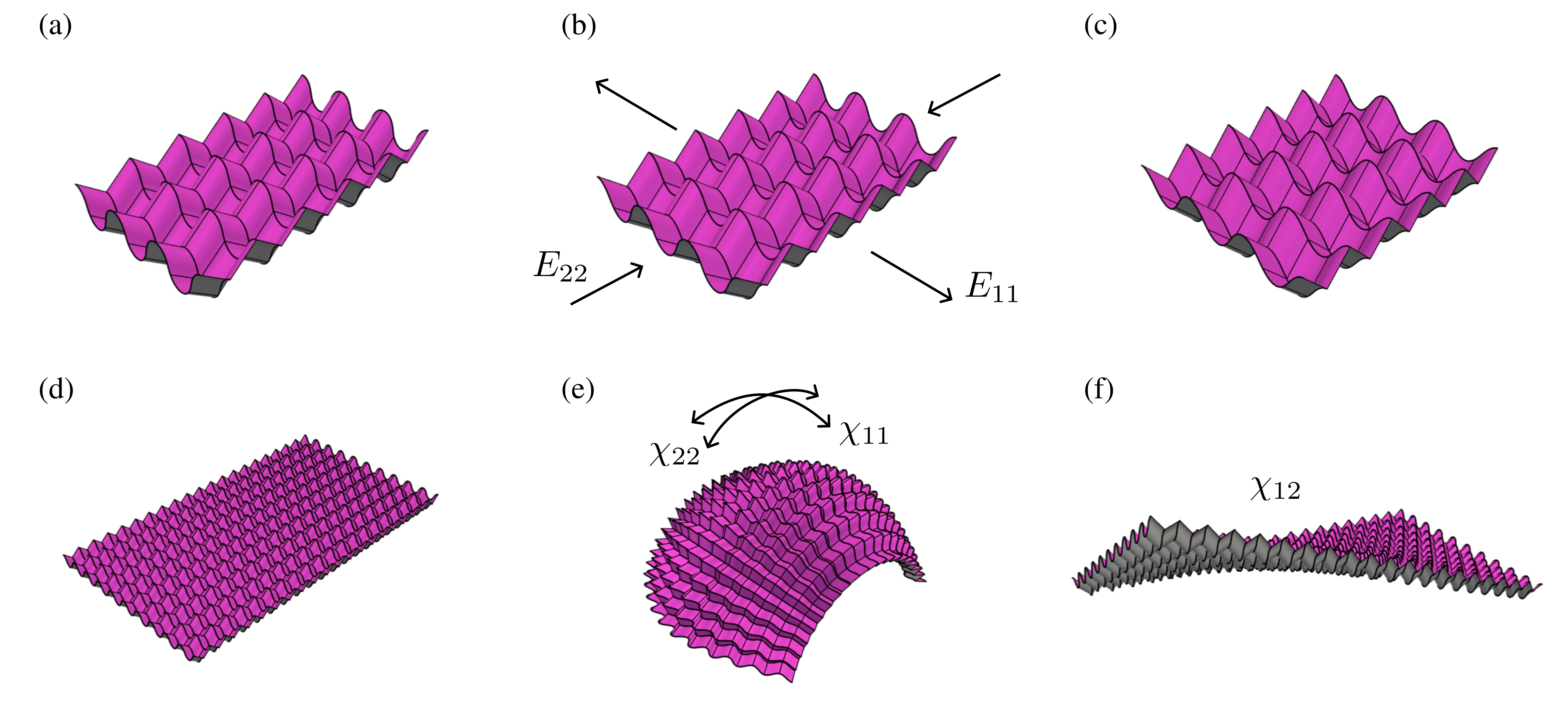}
    \caption{A periodic surface of the form $(x,y,f(x)+g(y))$ where $f$ is a sine function and $g$ is a triangular pattern: (a-c) an effective membrane mode where the surface stretches without bending by bending without stretching; (d) the same surface with more unit cells to help appreciate the curvatures seen in (e) and (f). Theorem~\ref{thm} states that (b) implies (e) and that the possibility of (f) implies the impossibility of an effective shear membrane mode. Surfaces are computed using a triangulation; source code available at \url{https://github.com/nassarh/curvedCreaseOrigami}.}
    \label{fig:main}
\end{figure}

The reason behind that correspondence between \emph{isometric} and \emph{compliant} is that isometric deflections only engage the flexure rigidity of the shell which is of order~$t^3$ with~$t$ being the thickness~\cite{Landau1986}. By contrast, non-isometric deflections are much stiffer since they also engage the membrane rigidity of order~$t$. That being said, it would be excessive to claim that any isometric deflection describes a compliant \emph{mode}, for within the word ``mode'' is typically embedded a notion of ``uniformity'' whereby the mode acts on the shell as a whole and not on some restricted parts of it (say a corner or some small flat piece). Here, we focus on two specific modes: effective membrane modes and effective flexure modes.

\begin{definition}
    Let $\ten x: (x,y)\mapsto (x,y,f(x,y))$ be a periodic surface. An \emph{effective membrane mode} is an isometric deflection $\dot{\ten x}: (x,y)\mapsto (u(x,y),v(x,y),w(x,y))$ such that ($i$) $w$ is periodic and $(ii)$ the effective strains
    \[
        E_{11} \equiv \scalar{\ten p_1,\dot{\ten p}_1},\quad
        E_{22} \equiv \scalar{\ten p_2,\dot{\ten p}_2},\quad
        E_{12} \equiv \frac{\scalar{\ten p_1,\dot{\ten p}_2}+\scalar{\ten p_2,\dot{\ten p}_1}}{2}
    \]
    are \emph{not} all zero. Therein, vectors $(\ten p_1,\ten p_2)$ outline a unit cell and vectors $(\dot{\ten p}_1,\dot{\ten p}_2)$ are their deflections, namely
    \[\begin{split}
        \ten p_1 &= \ten x(L_1,0) - \ten x(0,0),\quad
        \ten p_2 = \ten x(0,L_2) - \ten x(0,0),\\
        \dot{\ten p}_1 &= \dot{\ten x}(L_1,0) - \dot{\ten x}(0,0),\quad
        \dot{\ten p}_2 = \dot{\ten x}(0,L_2) - \dot{\ten x}(0,0),
    \end{split}
    \]
    where $[0,L_1]\times[0,L_2]$ is a unit cell.
\end{definition}

Thus, for an isometric deflection to be an effective membrane mode, it must $(i)$ produce the same deflections within all unit cells and $(ii)$ must leave a visible trace on the outline of the unit cell; see Figure~\ref{fig:main}a-c.

\begin{definition}
    Let $\ten x: (x,y)\mapsto (x,y,f(x,y))$ be a periodic surface. An \emph{effective flexure mode} is an isometric deflection $\dot{\ten x}: (x,y)\mapsto (u(x,y),v(x,y),w(x,y))$ such that $w$ is periodic up to a non-zero quadratic displacement $\chi_{11} x^2/2 + \chi_{22} y^2/2 + \chi_{12}xy$ of effective curvatures $\gt \chi$.
\end{definition}

Here too, all unit cells deform in the same fashion except for a quadratic deflection that describes an overall bent state; see Figure~\ref{fig:main}d-f. The main result we want to report here is that effective strains and effective curvatures constrain each other. That is: the ways in which the periodic shell effectively stretches, contracts and shears in the plane constrain the ways in which it could effectively bend and twist out of plane, and vice versa; see Figure~\ref{fig:main}.

\begin{theorem}\label{thm}
    Let $\ten x: (x,y)\mapsto (x,y,f(x,y))$ be a periodic surface that admits an effective strain $\ten E$ and effective curvatures $\gt\chi$. Then,
    \[
        E_{11}\chi_{22} -2 E_{12}\chi_{12} + E_{22}\chi_{11} = 0. 
    \]
\end{theorem}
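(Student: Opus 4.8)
The plan is to strip an isometric deflection down to its transverse part $w$, turn the isometry conditions into a single scalar (Gauss-type) identity for $w$, and then pair the membrane mode against a self-stress manufactured from the flexure mode. First I would expand $\str_{11}=\str_{22}=\str_{12}=0$ using $\partial_x\ten x=(1,0,f_x)$ and $\partial_y\ten x=(0,1,f_y)$, which gives $u_x=-f_xw_x$, $v_y=-f_yw_y$, and $u_y+v_x=-(f_xw_y+f_yw_x)$. Hence the purely in-plane strain $\str^{\mathrm{pl}}_{ij}:=\tfrac12(\partial_iu_j+\partial_ju_i)$ of the tangential field $(u,v)$ is forced to equal $-\tfrac12(\partial_if\,\partial_jw+\partial_jf\,\partial_iw)$. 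Eliminating $u,v$ by cross-differentiation (equivalently, imposing compatibility $\mathrm{inc}\,\str^{\mathrm{pl}}=0$) yields
\[ [f,w]:=f_{xx}w_{yy}-2f_{xy}w_{xy}+f_{yy}w_{xx}=\adj(\mathrm{Hess}\,f):\mathrm{Hess}\,w=0. \]
Thus each isometric deflection is governed by its transverse field through this linearized Gauss equation: the membrane mode corresponds to a periodic $\phi:=w$, and the flexure mode to a $\psi:=w$ whose Hessian is the constant $\gt\chi$ plus a periodic field.

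The key device is to build a periodic self-stress from the flexure mode by setting $\ten N:=\adj(\mathrm{Hess}\,\psi)$. Since $\mathrm{Hess}\,\psi$ is $\gt\chi$ plus a periodic Hessian, $\ten N$ is periodic and symmetric; being the cofactor of a Hessian it is automatically divergence free, $\div\ten N=\ten 0$; and the flexure mode's Gauss equation is exactly $\ten N:\mathrm{Hess}\,f=[\psi,f]=0$. Averaging over one cell and using $\scalar{\mathrm{Hess}(\text{periodic})}=\ten 0$ gives $\scalar{\ten N}=\adj(\gt\chi)$, with entries $\chi_{22},-\chi_{12},\chi_{11}$.

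I would then evaluate the cell integral $\int\ten N:\str^{\mathrm{pl}}(u^E,v^E)$ in two ways. Route one uses $\div\ten N=\ten 0$ to write the integrand as $\div\!\big(\ten N\,(u^E,v^E)\big)$ and applies the divergence theorem; because $\ten N$ is periodic while $(u^E,v^E)$ accrues only the constant increments $\dot{\ten p}_1,\dot{\ten p}_2$ across a period, the periodic parts cancel on opposite edges and only these jumps survive, so that contracting them against the constant edge-fluxes of $\ten N$ (fixed by $\scalar{\ten N}=\adj(\gt\chi)$) should assemble the pairing $\adj(\ten E):\gt\chi=E_{11}\chi_{22}-2E_{12}\chi_{12}+E_{22}\chi_{11}$; the rotation (antisymmetric) part of $(u^E,v^E)$ cancels in this symmetric pairing exactly as it does in the definition of $\ten E$. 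Route two substitutes $\str^{\mathrm{pl}}=-\mathrm{sym}(\nabla f\otimes\nabla\phi)$, rewrites the integrand as $-(\ten N\nabla f)\cdot\nabla\phi$, and integrates by parts: the boundary term vanishes by periodicity and the bulk term collapses to $\phi\,\div(\ten N\nabla f)=\phi\,(\ten N:\mathrm{Hess}\,f)=0$. Equating the two evaluations delivers the theorem.

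The main obstacle is the boundary bookkeeping in route one. Both $\psi$ (quadratic growth) and $(u^E,v^E)$ (affine growth) are only quasi-periodic, so I must track period jumps carefully and verify that the edge-flux contractions collapse to precisely $E_{11}\chi_{22}-2E_{12}\chi_{12}+E_{22}\chi_{11}$ — in particular that the cell aspect-ratio factors cancel and the rotation part genuinely drops. A secondary point to spell out is the reduction itself: that the tangential field reconstructed from $w$ has constant period increments, so that $\ten E$ is well defined, and that $\mathrm{Hess}\,\psi$ is truly periodic; both follow from the two modes' definitions but should be stated explicitly before the self-stress $\ten N$ is introduced.
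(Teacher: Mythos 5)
Your strategy is essentially the one the paper itself sketches (the full proof being deferred to the cited reference): the linearized Gauss bracket $[f,w]=0$, its divergence form through the adjugate of the Hessian --- which is precisely the ``self-adjointness'' and the $f\leftrightarrow w$ symmetry the paper alludes to, since $[\psi,f]=\adj(\mathrm{Hess}\,\psi):\mathrm{Hess}\,f=\div\bigl(\adj(\mathrm{Hess}\,\psi)\nabla f\bigr)$ --- followed by a cell average. Your two ``secondary points'' do check out: $\mathrm{Hess}\,\psi$ is periodic by definition of the flexure mode, and the in-plane field $(u,v)$ of the membrane mode has constant period increments because its symmetric gradient $-\mathrm{sym}(\nabla f\otimes\nabla\phi)$ is periodic and the associated rotation, whose gradient is a zero-mean combination of derivatives of that periodic strain, is then itself periodic.

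The one point that deserves an honest warning is exactly the one you flagged: with the strains $E_{\alpha\beta}=\scalar{\ten p_\alpha,\dot{\ten p}_\beta}$ as literally defined here, the aspect-ratio factors do \emph{not} cancel. The flux of the periodic, divergence-free $\ten N$ across a line $x=\mathrm{const}$ is $L_2\scalar{\ten N}\ten e_1$ while $E_{11}=L_1(\dot{\ten p}_1)_1$ carries a factor $L_1$ inherited from $\ten p_1$, so route one delivers $\tfrac{L_2}{L_1}E_{11}\chi_{22}-2E_{12}\chi_{12}+\tfrac{L_1}{L_2}E_{22}\chi_{11}=0$ rather than the displayed identity. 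The shear term is unaffected because the two cross products $L_1(\dot{\ten p}_2)_1$ and $L_2(\dot{\ten p}_1)_2$ both carry the mixed factor $L_1L_2$ after weighting. The clean, weight-free identity is recovered for the normalized (engineering) strains $E_{\alpha\beta}/(L_\alpha L_\beta)$, which is evidently what the Poisson's-ratio reading of the theorem intends, or verbatim for a square cell $L_1=L_2$. So your argument is sound and coincides with the paper's; just state the conclusion for the normalized strains (or carry the $L_\alpha$ weights explicitly) rather than hoping the factors cancel, because they do not.
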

\begin{proof}
    Admitted. See~\cite{NASSAR2024105553}.
\end{proof}

\section{Previous work}
Several contributions have proven Theorem~\ref{thm} in particular cases of increasing complexity all within the context of origami and origami-like tessellations. Therein, the theorem is often stated as an equality (up to a sign) between an ``in-plane Poisson's coefficients''
\[
    \nu_\text{in} \equiv -\frac{E_{22}}{E_{11}},
\]
and an ``out-of-plane Poisson's coefficient''
\[
    \nu_\text{out} \equiv -\frac{\chi_{22}}{\chi_{11}},
\]
namely that
\[
    \frac{E_{22}}{E_{11}} = -\frac{\chi_{22}}{\chi_{11}}.
\]
In words: \emph{the relative ratio of effective extensions is equal and opposite to the ratio of effective normal curvatures}, the identity being valid in cases where $E_{12}=0$. The first proofs of this identity were obtained for the Miura ori~\cite{Wei2013, Schenk2013}, then for the ``eggbox pattern''~\cite{Nassar2017a,Nassar2017e} and were soon extended to the ``morph'' pattern~\cite{Pratapa2019} and more recently to any origami tessellation with four parallelogram panels per unit cell~\cite{Nassar2022,McInerney2022}. In all of these cases, the proof involves a computation that heavily relies on the discreteness and simplicity of the unit cell and is hard to generalize, through an equally tractable computation, to other surfaces with more panels per unit cell, with curved creases or with smoothly curved, non-polyhedral, features. In comparison, Theorem~\ref{thm} crushes at once all cases covered by the hypothesis of piecewise smoothness using high level concepts from differential geometry and avoids any ``brute force'' computations relying, say, on spherical trigonometry.

In order to give a taste of the kinds of results that Theorem~\ref{thm} relies on, let for simplicity $f$ be a smooth function, and recall that the Gauss curvature of the graph of $f$ is
\[
    K_f = \frac{\partial_{xx}f\partial_{yy}f-(\partial_{xy}f)^2}{\sqrt{1+(\partial_x f)^2+(\partial_y f)^2}}.
\]
By Gauss theorem, $K_f$ is an isometric invariant meaning that for an isometric deflection $(u,v,w)$, one has $K_f=K_{f+w}$ which implies
\[\label{eq:deltaK}
    \partial_{yy}f\partial_{xx}w-2\partial_{xy}f\partial_{xy}w+\partial_{xx}f\partial_{yy}w = 0.
\]
Theorem~\ref{thm} then leverages a self-adjointness property of this equation to justify an averaging step that transforms it into the desired identity. Therein, two ``symmetries'' are crucial: $(i)$ if $w$ is an isometric deflection of $f$ then $f$ is an isometric deflection of $w$. Indeed, the perturbation $K_f-K_{f+w}$ turns out to be symmetric in $f$ and $w$. And, $(ii)$ the Hessian of $f$ is a symmetric matrix. The full proof can be found in~\cite{NASSAR2024105553}.

\section{Use cases}
Morally, Theorem~1 establishes some sort of a ``principle of conservation'' between rigidity and flexibility. Here is how.
\begin{example}
    The plane $(x,y)\mapsto(x,y,0)$ admits the effective flexure mode $(x,y)\mapsto(0,0,x^2/2)$ whose effective curvatures are
    \[
        [\gt\chi] = \begin{bmatrix}1 & 0 \\ 0 & 0\end{bmatrix}.
    \]
    Thus, by Theorem~\ref{thm}, any effective strain $\ten E$ satisfies
    \[
        E_{22} = 0.
    \]
    Similarly, $E_{11}$ and $E_{12}$ are null meaning that the plane admits no effective membrane modes. No surprises here. It is just reassuring that the implications of the theorem are consistent with expectations in such simple cases.
\end{example}

\begin{example}
    The simply corrugated plane $(x,y)\mapsto (x,y,\cos(x))$ can still effectively bend in the $x$-direction meaning that it cannot effectively stretch in the $y$-direction: $E_{22}=0$. By contrast, the corrugation in the $x$-direction enables an effective membrane mode that stretches in the $x$-direction: $E_{11}\neq 0$. Then, by the theorem, there can be no effective bending in the $y$-direction: $\chi_{22}=0$. Again, this is consistent with expectations: corrugating in one direction stifles bending in the orthogonal direction. But the interpretation is quite novel and remarkable: the corrugated plane gains an effective membrane mode, and therefore loses and effective flexure mode.
\end{example}

\begin{example}
    Any doubly corrugated surface $(x,y)\mapsto (x,y,f(x)+g(y))$ admits the effective flexure mode $(x,y)\mapsto (0,0,xy)$ with effective curvature $\chi_{12}=1$. Indeed, $w=xy$ solves equation~\ref{eq:deltaK}, namely
    \[
        f''\partial_{yy}w + g''\partial_{xx}w=0
    \]
    Thus, by the theorem, effective membrane modes are all shearless: $E_{12}=0$. In other words, all doubly corrugated surfaces resist shear.
\end{example}

\begin{example}
    Similarly: the Miura ori twists easily indicating that it admits an effective flexure mode with $\chi_{12}\neq 0$. Thus, by the theorem, the Miura ori resists shear: $E_{12}=0$. The observation should be easily verifiable by anyone with access to a Miura ori: it twists easily but shears hardly, if at all.
\end{example}

\begin{example}
    Theorem~\ref{thm} also generalizes the identity of the Poisson's coefficients to cases where $E_{12}\neq 0$. Suppose a surface admits an effective membrane mode of effective strain $\ten E$ and an effective flexure mode of curvatures $\ten \chi$. Write the components of $E'_{\alpha\beta}$ and $\chi'_{\alpha\beta}$ in a basis aligned with the principal directions of strain. Then $E'_{12}=0$ and
    \[
        \frac{E'_{22}}{E'_{11}} = -\frac{\chi'_{22}}{\chi'_{11}}.
    \]
    In particular all ``auxetic'' periodic shells bend anticlastically and all ``anauxetic'' periodic shells bend synclastically (in the directions of principal effective strains).
\end{example}

\section{Extensions and limitations}
Here, we adopted an \emph{infinitesimal} notion of isometric deflections. There exists a variant of Theorem~\ref{thm} that applies in a geometrically non-linear setting, i.e., for \emph{finite} isometries~\cite{NASSAR2024105553}. In that case, Theorem~\ref{thm} provides an equation that describes the shape of a periodic shell resulting from a finite effective isometric deformation; see examples investigated in~\cite{Nassar2017a,Nassar2018b}.

It should be highlighted that Theorem~\ref{thm} only informs on the interactions between membrane and flexure modes \emph{should they exist}. But it does not provide existence conditions for either. Furthermore, Theorem~\ref{thm} only applies to \emph{closed} surfaces, i.e., surfaces without cutouts. Indeed, cutouts introduce boundary conditions that the present geometric approach appears to be ill-equipped to handle.

Finally, the theorem implicitly presumes the existence of two scales: a small scale that corresponds to the scale of the unit cell, and a large scale that corresponds to the scale of the effective strains and curvatures. However, recent investigations revealed that there is a third intermediate scale that emerges in cases where a periodic surface has effective flexure modes but no effective membrane modes (e.g., Ron Resch pattern, Yoshimura pattern, etc.); see \cite{Reddy2023, Imada2023}.

\section*{Acknowledgments}
Work supported by the (U.S.) NSF under CAREER award No.~CMMI-2045881.

\printbibliography
\end{document}